\documentclass[leqno]{amsart}
\usepackage{amsthm}
\usepackage{color}
\usepackage{amssymb}
\usepackage{amsmath,amsfonts,enumerate}
\usepackage{amsthm}

\numberwithin{equation}{section}

\newtheorem{thm}{Theorem}
\newtheorem{lem}[thm]{Lemma}

\newtheorem{rmk}[thm]{Remark}

\begin{document}

\title{Noetherian solvability of an operator singular integral equation with a Carleman shift in fractional spaces}

\author{N.K. Bliev}
\address{Institute of Mathematics and Mathematical Modeling, 050010 Almaty, Kazakhstan}
\email{bliyev.nazarbay@mail.ru}
\author{K.S. Tulenov}
\address{Al-Farabi Kazakh National University, 050040 Almaty, Kazakhstan; Institute of Mathematics and Mathematical Modeling, 050010 Almaty, Kazakhstan}
\email{tulenov@math.kz}

\keywords{Noetherian solvability, singular integral equation, Carleman shift, Besov spaces}
\date{}
\begin{abstract} In this paper, we obtain conditions of Noetherian solvability and the Index formula for a singular integral equation with a Cauchy kernel and a Carleman shift in Besov space, which
is embedded into the space of continuous functions on a closed Lyapunov contour, but not into the class of functions satisfying H\"{o}lder condition.
\end{abstract}

\maketitle

\section{Introduction}
Let $\Gamma$ be a closed Lyapunov contour from the class $C_\nu ^1$, $\frac{2}{p} - 1 < \nu  \le 1$, $1 < p < 2$. Let $1 < p < 2$ and  $r = \frac{1}{p}.$ For brevity, we denote Besov spaces $B_{p,1}^r\left( \Gamma  \right)$ by $B\left( \Gamma  \right).$ In $B\left( \Gamma  \right)$, consider the singular integral equation (SIE)
\begin{equation}\label{eq: 1}
\begin{split}
M\varphi  \equiv& a\left( t \right)\varphi \left( t \right) + b\left( t \right)\varphi \left[ {\alpha \left( t \right)} \right] + \frac{{c\left( t \right)}}{{\pi i}}\int\limits_\Gamma  {\frac{{\varphi \left( \tau  \right)}}{{\tau  - t}}d\tau }  + \frac{{d\left( t \right)}}{{\pi i}}\int\limits_\Gamma  {\frac{{\varphi \left( \tau  \right)}}{{\tau  - \alpha \left( t \right)}}d\tau } \\
 &+ \int\limits_\Gamma  {K\left( {t,\tau } \right)\varphi \left( \tau  \right)d\tau }  = g\left( t \right),
\end{split}
\end{equation}
where $a(\cdot)$, $b(\cdot)$, $c(\cdot)$, $d(\cdot)$ and $g(\cdot)$ are functions from the space $B(\Gamma)$, $\alpha(\cdot)$ is a Carleman shift: homeomorphically maps $\Gamma$ into itself with preservation or change of orientation on $\Gamma$, and satisfies condition
\begin{equation}\label{eq: 2}
\alpha \left[ {\alpha \left( t \right)} \right] = t.
\end{equation}
Suppose that there exist derivative $\alpha '\left( t \right)$ of $\alpha \left( t \right)$ which belongs to the class ${H_\mu }\left( \Gamma  \right)$ ($0 < \mu  \le 1$) of functions defined on $\Gamma$ satisfying the H\"{o}lder condition with exponent $\mu$. The spaces ${H_\mu }\left( \Gamma  \right)$ ($0 < \mu  \le 1$) are Banach space with the norm
 $$\|\varphi\|_{H_{\mu}( \Gamma )}:=\max_{t\in \Gamma}|\varphi(t)|+\sup_{t,\tau\in \Gamma}\frac{|\varphi(\tau)-\varphi(t)|}{|\tau-t|^{\mu}}$$
 (see \cite[Chapter I.2, p. 16]{KL}).
 Let the kernel $K\left( {t,\tau } \right)$ in \eqref{eq: 1} has such a weak singularity that the corresponding integral operator is completely continuous in $B(\Gamma)$.
It is well-known that the space $B(\Gamma)$ is embedded into the space $C(\Gamma)$ of continuous functions on $\Gamma$, but not into the space ${H_\mu }\left( \Gamma  \right)$, for any $0 < \mu  \le 1$ (see \cite{AIN}). Moreover, $B(\Gamma)$ is a commutative Banach algebra with ordinary operations of addition and multiplication \cite{B1, B2}. It was proved in \cite[pp. 45-47]{B1} that operator $M$ is bounded on  $B(\Gamma).$ The equation (\ref{eq: 1}) was studied in the spaces ${H_\mu }\left( \Gamma  \right)$, $0 < \mu  \le 1$, and ${L_p}\left(\Gamma\right)$,
$1 < p < \infty $, in \cite{L}. We also refer the interested reader to \cite{KL} and the references therein.
Boundary value problems with operators with complex conjugate and a Carleman shift and the corresponding singular integral equations (SIE) including such operators are one of the important part of the theory of analytic functions of complex variables. Such problems have significant applications and they are played crucial role, for example, in the theory of boundary value problems for differential operators \cite{BD,B4}. Boundary value problems with shift were investigated by D. Hilbert in the beginning of the 20th century. Singular integral equations with a Cauchy kernel and boundary value problems were studied by many Mathematicians and Mechanics. For more details on this direction we refer the reader to \cite{M, PS}. An application of Besov spaces possessing limit embedding theorems allows to weaken the condition from the density of H\"{o}lder continuous integrals by replacing it just continuity in terms of Besov spaces. This direction was developed by the first author \cite{B1,B2}. The theory of SIE with a Cauchy kernel and the corresponding boundary value problems for piecewise analytic functions have been investigated sufficiently. However, almost all works in this direction were studied in the spaces $H_{\mu},$ $0<\mu\leq1,$ and $L_{p}$, $p>1$ \cite{KL,L}.
 In this work, we obtain conditions on Noetherian solvability of the singular integral equation \eqref{eq: 1} with a Cauchy kernel and a Carleman shift in Besov spaces $B_{p,1}^{r}(\Gamma)$, $1<p<2,$ $r=\frac{1}{p},$ which is embedded into the class $C(\Gamma)$ of continuous functions on a closed Lyapunov contour $\Gamma$, but not the class $H_{\mu}(\Gamma),$ $0<\mu\leq 1,$ of functions satisfying H\"{o}lder condition with exponent $\mu$. We also obtain the Index formula for this singular integral equation. A corresponding characteristic equation is equivalent to a boundary value problem for piecewise analytic functions vanishing at infinity.

\section{Preliminaries}

\subsection{${B_2}\left( \Gamma  \right)$  and  ${B_{2\times 2}(\Gamma)}$ spaces}
Let ${B_2}\left( \Gamma  \right)$ be a set of all 2-dimensional vectors with components from ${B}\left( \Gamma  \right)$ and $B_{2\times2}(\Gamma)$  be a set of all $2\times2$ square matrices with elements from ${B}\left( \Gamma  \right)$. The set $B_2(\Gamma)$ can be supplied with the norm by taking as the norm of the vector $X = \left( {{x_1},{x_2}} \right)$ and considering the sum of the norms of the individual components:
$$\left\| X \right\|_{B_2(\Gamma)}: = {\left\| {{x_1}} \right\|_{B\left( \Gamma  \right)}} + {\left\| {{x_2}} \right\|_{B\left( \Gamma  \right)}}.$$

In this case, the norm of the matrix $A = \left\{ {{a_{kj}}} \right\}_1^2 \in {B_{2\times2}}\left( \Gamma  \right)$  can be defined, for example, by
$$\left\| A \right\|_{B_{2\times2}(\Gamma) } := 2\mathop {\max }\limits_{j,k} {\left\| {{a_{jk}}} \right\|_{B\left( \Gamma  \right)}}.$$

Then the space ${B_{2\times 2}(\Gamma)}$ with this norm will also be a Banach algebra \cite{B3}.

Let $L(B)$ be the set of all continuous linear operators in $B(\Gamma)$. Then each operator $A \in L\left( {{B_2}} \right)$ can be represented as a two-dimensional matrix $A = \left\{ {{a_{j k}}} \right\}$, where ${a_{jk}} \in L\left( B \right)$.  Moreover, the matrix operator $A$ is completely continuous if and only if all operators multiplied by ${a_{jk}}$ in $B(\Gamma)$ are completely continuous.

\subsection{Noether operators and their $d$-characteristics}
Set integers
$$\alpha \left( L \right) = \dim \ker L,\,\,\beta \left( L \right) = \dim \, \text{coker} L.$$

An operator $L$ is said to have a finite $d$ - characteristics or a finite index (we will denote it by $Ind$) if both $\alpha(L)$ and $\beta(L)$ are finite. A linear closed normally solvable (in the sense of Hausdorff \cite{L}) operator $L$ is called a Noetherian operator or $F$-operator, if its $d$-characteristic or index is finite, and semi-Noetherian, if at least one of the numbers $\alpha(L)$ and $\beta(L)$ are finite. In the case of semi-Noetherian, we will distinguish between $F_+$ - operators $(\alpha \left( L \right) < \infty )$ and $F_-$ - operators $(\beta \left( L \right) < \infty )$.

Let $G$ be a region of the complex plane $E$ bounded by a closed Lyapunov contour $\Gamma$ and let $G_{+}:=G$, $G_{-}:=E-\overline {G_{+}}$. Throughout this paper, we assume that $0 \in G_{ + }$ and $z = \infty  \in G_{-}.$

A right factorization of non-singular for everywhere on $\Gamma$ matrices $A(t) \in {B_{2\times2}}(\Gamma)$ is called the following its representation

\begin{equation}\label{eq: 11}
A\left( t \right) = {A_-}\left( t \right)\cdot D \left( t \right)\cdot{A_ + }\left( t \right),\, t \in \Gamma ,
\end{equation}
with a diagonal matrix $D(t)$ as $D\left( t \right) = \left\{ {{t^{jk}}{\delta _{jk}}} \right\}_1^2$, $t \in \Gamma $. Here,
 ${k_1} \ge {k_2}$ are some numbers which are uniquely determined by the matrices $A(t),$  $A_{\pm}$ are square matrices of the second order, which can be continued analytically in the domains $G_{\pm}$ and continuous in $\overline{G_ +} $, and satisfying
$$det{A_ + }\left( z \right) \ne 0, \,\ z \in \overline {G_{\pm}} ,\,det{A_ - }\left( z \right) \ne 0 , \,\ z \in \overline {G_{-}}.$$

The factorization of the matrix $A(t)$, which is obtained by swapping the multipliers $A_{±}(t)$ in (\ref{eq: 11}), is called the left factorization. Obviously, each right (left) factorization of the matrix $A(t),$ as well as the inverse matrix $A^{-1}(t),$ generates right (left) factorization of transposed matrix $A^{T}(t).$
 Clearly, $B(\Gamma)$ is a disintegrating $R$-algebra \cite[Subsection $4^0$]{B3}, therefore, every non-singular matrix $A\left( t \right) \in {B_{2\times2}}\left( \Gamma \right)$ admits a right (and left) factorization \cite[Corollary VII. 2.1, p. 309]{PS}.

\section{Noetherian solvability of SIE and its index formula}

In this section we prove a Noetherian solvability of \eqref{eq: 1} and obtain its Index formula.
First we need some preparation.
Let us consider the singular integral equation (SIE) (\ref{eq: 1}) in Besov space $B(\Gamma).$
Characteristic equation ($K( {t,\tau }) = 0$) (\ref{eq: 1}) is equivalent to a boundary value problem of finding a piecewise analytic functions $\left\{ {{\Phi^ + }\left( z \right),{\Phi^ - }\left( z \right)} \right\}$ vanishing at infinity,

\begin{equation}\label{eq: 3}
{a_1}\left( t \right){\Phi ^ + }\left( t \right) + {b_1}\left( t \right){\Phi ^ + }\left[ {\alpha \left( t \right)} \right] + {c_1}\left( t \right){\Phi ^ - }\left( t \right) + {d_1}\left( t \right){\Phi ^ - }\left[ {\alpha \left( t \right)} \right] = g\left( t \right),
\end{equation}
where
\begin{equation}
\begin{array}{l}
{a_1}\left( t \right) = a\left( t \right) + c\left( t\right), \quad {c_1}\left( t \right) = c\left( t \right) - a\left( t \right),\\
{b_1}\left( t \right) = b\left( t \right) + d\left( t\right), \quad {d_1}\left( t \right) = d\left( t \right) - b\left( t \right).
\end{array}
\label{eq: 4}
\end{equation}
Following \cite{L}, we construct an auxiliary system of two singular integral equations without shift which is called {\it corresponding system} to the equation \eqref{eq: 1}. For this we substitute the independent variable $t$ into $\alpha(t)$ in the equation \eqref{eq: 1}, and associate such obtained equation with the initial equation \eqref{eq: 1}.
Setting new unknown functions
$${\rho _1}\left( t \right) = \varphi \left( t \right),\quad {\rho _2}\left( t \right) = \varphi \left[ {\alpha \left( t \right)} \right],$$
we obtain a corresponding system of two SIEs with a Cauchy kernel  relatively unknown vector $\rho \left( t \right) = \left\{ {{\rho _1}\left( t \right),{\rho _2}\left( t \right)} \right\}$

\begin{equation}\label{eq: 5}
\begin{split}
a(t){\rho _1}\left( t \right)& + b\left( t \right){\rho _2}\left( t \right) + \frac{{c\left( t \right)}}{{\pi i}}\int\limits_\Gamma  {\frac{{{\rho _1}\left( \tau  \right)}}{{\tau  - t}}d\tau }  \\
&+\frac{{\gamma d\left( t \right)}}{{\pi i}}\int\limits_\Gamma  {\frac{{\alpha '\left( \tau  \right){\rho _2}\left( \tau  \right)}}{{\alpha \left( \tau  \right) - \alpha \left( t \right)}}d\tau }  + \int\limits_\Gamma  {K\left( {t,\tau } \right){\rho _1}\left( \tau  \right)d\tau }  = g\left( t \right),\\
b\left[ {\alpha \left( t \right)} \right]{\rho _1}\left( t \right)& + a\left[ {\alpha \left( t \right)} \right]{\rho _2}\left( t \right) + \frac{{d\left[ {\alpha \left( t \right)} \right]}}{{\pi i}}\int\limits_\Gamma  {\frac{{{\rho _1}\left( \tau  \right)}}{{\tau  - t}}d\tau }  + \frac{{\gamma c\left[ {\alpha \left( t \right)} \right]}}{{\pi i}}\int\limits_\Gamma  {\frac{{\alpha '\left( \tau  \right){\rho _2}\left( \tau  \right)}}{{\alpha \left( \tau  \right) - \alpha \left( t \right)}}d\tau } \\
&+\gamma \int\limits_\Gamma  {K\left[ {\alpha \left( t \right),\alpha \left( \tau  \right)} \right]\alpha '\left( \tau  \right){\rho _2}\left( \tau  \right)d\tau }  = g\left[ {\alpha \left( t \right)} \right],
\end{split}
\end{equation}
where the coefficient $\gamma$ takes the value 1 or -1, if respectively $\alpha(t)$ maps $\Gamma$ into itself with preservation or change of orientation on $\Gamma$.  An operator,  applying of which to the vector $\left( {{\rho _1},{\rho _2}} \right)$ gives the left hand side of (\ref{eq: 5}), can be written as

\begin{equation}\label{eq: 5a}
L \equiv P\left( t \right)I + Q\left( t \right)S + {D_1},
\end{equation}
where matrices $P\left( t \right)$ and $Q\left( t \right)$ have forms
\begin{equation}\label{PQ}
\begin{array}{l}
P\left( t \right) = \left( {\begin{array}{*{20}{c}}
{a\left( t \right)}&{b\left( t \right)}\\
{b\left[ {\alpha \left( t \right)} \right]}&{a\left[ {\alpha \left( t \right)} \right]}
\end{array}} \right),\quad Q\left( t \right) = \left( {\begin{array}{*{20}{c}}
{c\left( t \right)}&{\gamma d\left( t \right)}\\
{d\left[ {\alpha \left( t \right)} \right]}&{\gamma c\left[ {\alpha \left( t \right)} \right]}
\end{array}} \right),\\
\end{array}
\end{equation}
 $D_{1}$ is a completely continuous operator, $I$ is an identity operator in $B_{2}(\Gamma),$ and $S$ is a scalar integral operator defined by the following formula
\begin{equation}\label{S oper}(S\varphi)(t)  = \frac{1}{{\pi i}}\int\limits_\Gamma  {\frac{{\varphi \left( \tau  \right)}}{{\tau  - t}}d\tau }, \quad t\in \Gamma.
\end{equation}
Note that for any $\varphi\in B(\Gamma)$ the above singular integral (see for more information \cite{P}) exists in the sense of principle value \cite{B1} (see also \cite{B3}).
Operator $L$  in (\ref{eq: 5a}) acts in the space $B(\Gamma)$ (for more details see \cite{B1}).
Moreover,
it belongs to $L(B_2)$ and can be written as

\begin{equation}\label{eq: 10}
L = C{P_1} + D{Q_1} + T,
\end{equation}
where $C$ and $D$ are multiplication operators in $B_2(\Gamma)$ corresponding matrix functions
$$
C\left( t \right) =P\left( t \right) + Q\left( t \right) \quad \text{and} \quad D\left( t \right) = P\left( t \right) - Q\left( t \right),
$$
where $P(t)$ and $Q(t)$ defined in \eqref{PQ},
$$
{S_1} = \left\{ {S{\delta _{jk}}} \right\}_1^2,\,\,{P_1} = \frac{1}{2}\left( {I + {S_1}} \right),\,\,{Q_1} = \frac{1}{2}\left( {I - {S_1}} \right),
$$
and $S$ is a scalar integral operator defined in \eqref{S oper}, $I$ is an identity operator in $B_{2}(\Gamma)$,  $\delta_{jk}$ is the Kronecker symbol.
It was proved in \cite{B1} that $S$ is bounded in $B(\Gamma).$
Operators as $L$ generates an algebra \cite{B2}.

We introduce additional SIE with a Carleman shift corresponding to (\ref{eq: 1})

\begin{equation}\label{eq: 6}
\begin{split}
K\chi  &\equiv a\left( t \right){\chi}\left( t \right) - b\left( t \right){\chi}\left[ {\alpha \left( t \right)} \right] + \frac{{c\left( t \right)}}{{\pi i}}\int\limits_\Gamma  {\frac{{{\chi}\left( \tau  \right)}}{{\tau  - t}}d\tau }  - \frac{{d\left( t \right)}}{{\pi i}}\int\limits_\Gamma  {\frac{{{\chi}\left( \tau  \right)}}{{\tau  - \alpha \left( t \right)}}d\tau } \\
&+\int\limits_\Gamma  {K\left( {t,\tau } \right){\chi}\left( \tau  \right)d\tau }  = 0.
\end{split}
\end{equation}
It is easy to see that if ${\rho _1}\left( t \right) = {\chi}\left( t \right)$, ${\rho _2}\left( t \right) = - {\chi}\left(\alpha t \right)$, then using the procedure described above, you can obtain the corresponding system (\ref{eq: 5}) from (\ref{eq: 6}).

We also consider two SIEs with a Carleman shift, one of which is union with (\ref{eq: 1}), and the other is union with the accompanying equation (\ref{eq: 6}). Union operators are understood in the sense of the following identities
$$\int\limits_\Gamma  {\left( {M\varphi } \right)\left( t \right)\psi \left( t \right)dt}  = \int\limits_\Gamma  {\varphi \left( t \right)\left( {M'\psi } \right)\left( t \right)dt}.$$
The mentioned union equations have the form
\begin{equation}\label{eq: 7}
\begin{split}
M'\psi  &\equiv a\left( t \right)\psi \left( t \right) + \gamma \alpha '\left( t \right)b\left[ {\alpha \left( t \right)} \right]\psi \left[ {\alpha \left( t \right)} \right] - \frac{1}{{\pi i}}\int\limits_\Gamma  {\frac{{c\left( \tau  \right)\psi \left( \tau  \right)}}{{\tau  - t}}d\tau } \\
 &- \frac{\gamma }{{\pi i}}\int\limits_\Gamma  {\frac{{d\left[ {\alpha \left( \tau  \right)} \right]\alpha '\left( \tau  \right)\psi \left[ {\alpha \left( \tau  \right)} \right]}}{{\tau  - t}}d\tau }  + \int\limits_\Gamma  {m\left( {\tau ,t} \right)\psi \left( \tau  \right)d\tau }  = 0,
\end{split}
\end{equation}
and
\begin{equation}\label{eq: 8}
\begin{split}
K'\omega  &\equiv a\left( t \right)\omega \left( t \right) + \gamma \alpha '\left( t \right)b\left[ {\alpha \left( t \right)} \right]\omega \left[ {\alpha \left( t \right)} \right] - \frac{1}{{\pi i}}\int\limits_\Gamma  {\frac{{c\left( \tau  \right)\omega \left( \tau  \right)}}{{\tau  - t}}d\tau } \\
& + \frac{\gamma }{{\pi i}}\int\limits_\Gamma  {\frac{{d\left[ {\alpha \left( \tau  \right)} \right]\alpha '\left( \tau  \right)\omega \left[ {\alpha \left( \tau  \right)} \right]}}{{\tau  - t}}d\tau }  + \int\limits_\Gamma  {m\left( {\tau ,t} \right)\omega \left( \tau  \right)d\tau }  = 0.
\end{split}
\end{equation}

It is clear that equation (\ref{eq: 8}) is accompanying to the union equation (\ref{eq: 7}). The system of SIE without shift corresponding to the equations (\ref{eq: 8}) and (\ref{eq: 7}) have forms

\begin{equation}\label{eq: 9}
\begin{split}
a(t){\omega _1}\left( t \right)&+ b\left[ {\alpha \left( t \right)} \right]{\omega _2}\left( t \right) - \frac{1}{{\pi i}}\int\limits_\Gamma  {\frac{{c\left( \tau  \right){\omega _1}\left( \tau  \right)}}{{\tau  - t}}d\tau } \\
&- \frac{1}{{\pi i}}\int\limits_\Gamma  {\frac{{d\left[ {\alpha \left( \tau  \right)} \right]{\omega _2}\left( \tau  \right)}}{{\tau  - t}}d\tau }  + \int\limits_\Gamma  {m\left( {\tau ,t} \right){\omega _1}\left( \tau  \right)d\tau }  = 0,\\
b\left( t \right){\omega _1}\left( t \right)& + a\left[ {\alpha \left( t \right)} \right]{\omega _2}\left( t \right) - \frac{{\gamma \alpha '\left( t \right)}}{{\pi i}}\int\limits_\Gamma  {\frac{{d\left( \tau  \right){\omega _1}\left( \tau  \right)}}{{\alpha \left( \tau  \right) - \alpha \left( t \right)}}d\tau }  - \frac{{\gamma \alpha '\left( t \right)}}{{\pi i}}\int\limits_\Gamma  {\frac{{c\left[ {\alpha \left( \tau  \right)} \right]{\omega _2}\left( \tau  \right)}}{{\alpha \left( \tau  \right) - \alpha \left( t \right)}}d\tau } \\
 &+ \gamma \alpha '\left( t \right)\int\limits_\Gamma  {m\left[ {\alpha \left( \tau  \right),\alpha \left( t \right)} \right]{\omega _2}\left( \tau  \right)d\tau }  = 0.
\end{split}
\end{equation}
It is easy to verify that the system (\ref{eq: 9}) is union with the corresponding system of equations (\ref{eq: 5}).
Taking into account all the above information, we obtain the following results.

\begin{lem}\label{auxiliary lem1}
The number $l$ of linearly independent solutions of the homogeneous ($g\left( t \right) \equiv 0$) equation corresponding to the system of equations (\ref{eq: 5}) is equal to the sum of numbers $l_1$ and $l_2$ of the linearly independent solutions of this homogeneous equation (\ref{eq: 1}) and the accompanying equation (\ref{eq: 6}), respectively, i.e $l=l_1+l_2$.
\end{lem}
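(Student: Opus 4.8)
The plan is to exploit the involutive symmetry of the corresponding system \eqref{eq: 5} forced by the Carleman condition \eqref{eq: 2}. Write $(W\varphi)(t):=\varphi[\alpha(t)]$; by \eqref{eq: 2} this is an involution, $W^{2}=I$, and since $\alpha'\in H_\mu(\Gamma)$, composition with $\alpha$ maps $B(\Gamma)$ boundedly into itself (this invariance is already used in \cite{B1} when forming $M$ and $L$). On $B_2(\Gamma)$ put $U(\rho_1,\rho_2):=(W\rho_2,W\rho_1)$; then $U^{2}=I$, and the projectors $P^{\pm}:=\tfrac12(I\pm U)$ give $B_2(\Gamma)=B_2^{+}\oplus B_2^{-}$ with
$$B_2^{+}=\{(\varphi,W\varphi):\varphi\in B(\Gamma)\},\qquad B_2^{-}=\{(\chi,-W\chi):\chi\in B(\Gamma)\}.$$

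First I would check, by substituting $\rho=(\varphi,W\varphi)$ (respectively $\rho=(\chi,-W\chi)$) into \eqref{eq: 5} and performing the change of variable $\sigma=\alpha(\tau)$ in the Cauchy-type integrals (using $\alpha'(\tau)\,d\tau=d\sigma$, $\tau=\alpha(\sigma)$, and the orientation factor $\gamma$), that the first row of \eqref{eq: 5} becomes $(M\varphi)(t)$ and the second row becomes $(M\varphi)[\alpha(t)]$ (respectively $(K\chi)(t)$ and $(K\chi)[\alpha(t)]$; the second case is essentially the remark recorded after \eqref{eq: 6}). The principal-value character of the integrals survives the substitution because $\alpha'\in H_\mu(\Gamma)$ on the Lyapunov contour. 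Consequently
$$(\varphi,W\varphi)\in\ker L\iff\varphi\in\ker M,\qquad(\chi,-W\chi)\in\ker L\iff\chi\in\ker K,$$
so the obviously injective maps $\varphi\mapsto(\varphi,W\varphi)$ and $\chi\mapsto(\chi,-W\chi)$ identify $\ker L\cap B_2^{+}$ with $\ker M$ and $\ker L\cap B_2^{-}$ with $\ker K$.

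The key step is to prove that $U$ maps $\ker L$ into itself, so that $\ker L$ is $U$-invariant and therefore $\ker L=(\ker L\cap B_2^{+})\oplus(\ker L\cap B_2^{-})$. This is where the construction of \eqref{eq: 5} (substitute $t\mapsto\alpha(t)$ in \eqref{eq: 1} and adjoin) pays off: the same change of variable $\sigma=\alpha(\tau)$ shows that the first row of \eqref{eq: 5} evaluated at $U\rho$ equals the second row of \eqref{eq: 5} evaluated at $\rho$ and then composed with $\alpha$, and symmetrically for the second row. Hence if $L\rho=0$, so that both rows vanish identically, then both rows vanish at $U\rho$ as well, i.e. $U\rho\in\ker L$.

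Combining: each $\rho\in\ker L$ splits as $\rho=P^{+}\rho+P^{-}\rho$ with $P^{\pm}\rho\in\ker L$, where $P^{+}\rho=(\varphi,W\varphi)$, $\varphi=\tfrac12(\rho_1+W\rho_2)\in\ker M$, and $P^{-}\rho=(\chi,-W\chi)$, $\chi=\tfrac12(\rho_1-W\rho_2)\in\ker K$; since $B_2^{+}\cap B_2^{-}=\{0\}$ the sum is direct, and taking dimensions yields $l=\dim\ker L=\dim\ker M+\dim\ker K=l_1+l_2$. The main obstacle is purely computational: keeping track of the Jacobian $\alpha'$ and the orientation constant $\gamma$ in the transformed singular integrals, and checking that a weakly singular kernel stays weakly singular under $\sigma=\alpha(\tau)$ so that the term $D_1$ in \eqref{eq: 5a} remains completely continuous; once these symmetry identities are established, the argument is elementary linear algebra.
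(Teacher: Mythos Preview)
Your argument is correct and is essentially the same strategy as the paper's: the conditions \eqref{ro2} and \eqref{-ro2} singled out there are precisely your eigenspace conditions $U\rho=\rho$ and $U\rho=-\rho$, and the decomposition $\ker L=(\ker L\cap B_2^{+})\oplus(\ker L\cap B_2^{-})$ is exactly what the paper invokes by citing \cite[Lemma~6.1]{L}. Your presentation is in fact more explicit than the paper's sketch, which defers the $U$-invariance of $\ker L$ and the resulting direct-sum splitting to the reference.
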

\begin{proof} It follows from the method of constructing corresponding system \eqref{eq: 5} that for every solution of homogeneous equation $g\left( t \right) \equiv 0$ is corresponded a certain solution of the homogeneous system \eqref{eq: 5}. Indeed, if $\varphi$ is a solution of the homogeneous equation \eqref{eq: 1}, then a vector $\rho(t)=\{\rho_{1}(t),\rho_{2}(t)\},$ where $\rho_{1}(t)=\varphi(t)$ and $\rho_{2}(t)=\varphi[\alpha(t)],$ becomes a solution of the homogeneous system \eqref{eq: 5}. Moreover, this solution satisfies the following condition
\begin{equation}\label{ro2}
\rho_{2}[\alpha(t)]=\rho_{1}(t).
\end{equation}
Similarly, if $\chi$ is a solution of the homogeneous equation \eqref{eq: 6}, then a vector $\rho(t)=\{\rho_{1}(t),\rho_{2}(t)\}$ with components $\rho_{1}(t)=\chi(t)$ and $\rho_{2}(t)=-\chi[\alpha(t)]$ is a solution of the homogeneous system \eqref{eq: 5} satisfying
\begin{equation}\label{-ro2}\rho_{2}[\alpha(t)]=-\rho_{1}(t).
\end{equation}
Futher, let $l$ be a number of linearly independent solutions of the homogeneous system of equations \eqref{eq: 5} and let $l_1$ and $l_2$ be numbers of the linearly independent solutions of this homogeneous equation (\ref{eq: 1}) and (\ref{eq: 6}), respectively. Then, by using \eqref{ro2} and \eqref{-ro2}, it is easy to see that $l=l_1+l_2.$ It can be proved by repeating argument in the proof of Lemma 6.1 from \cite[pp. 63-66]{L}.
\end{proof}

Let ${l^*}$ be a number of linearly independent solutions of the system (\ref{eq: 9}), allied with the corresponding system of equations (\ref{eq: 5}), and $l_1^*$ and $l_2^*$ be numbers of linearly independent solutions of equations (\ref{eq: 7}) and (\ref{eq: 8}), which are the union equations with (\ref{eq: 1}) and the accompanying equation (\ref{eq: 6}), respectively.

Similar to Lemma \ref{auxiliary lem1}, we obtain the following result.
\begin{lem}\label{auxiliary lem2}
Numbers $l^*$, $l_1^*$ and $l_2^*$ satisfy the equation $l^*=l_1^*+l_2^*$.
\end{lem}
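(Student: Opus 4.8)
The plan is to run the proof of Lemma~\ref{auxiliary lem1} once more, with the triple \eqref{eq: 1}, \eqref{eq: 6}, \eqref{eq: 5} replaced by the union equations \eqref{eq: 7}, \eqref{eq: 8} and the shift-free system \eqref{eq: 9}. First I would record the structural fact that \eqref{eq: 9} arises from \eqref{eq: 7} (equivalently, from \eqref{eq: 8}) by exactly the reduction of Section~3: one replaces $t$ by $\alpha(t)$, adjoins the new equation to the original one, and puts $\omega_1(t)=\psi(t)$, $\omega_2(t)=\gamma\,\alpha'(t)\,\psi[\alpha(t)]$; substituting this pair into the first row of \eqref{eq: 9} returns \eqref{eq: 7}. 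Before using this one checks that $M'$ and $K'$ fall under the class of equations treated in the paper: their coefficients, assembled from $a,b,c,d$ and $\alpha'$, lie in $B(\Gamma)$ --- here one uses $\alpha'\in H_\mu$ together with the facts that $B(\Gamma)$ is a commutative Banach algebra embedded in $C(\Gamma)$ and that composition with $\alpha$ maps $B(\Gamma)$ into itself --- the kernels $m(\tau,t)$ generate completely continuous operators in $B(\Gamma)$, and the Carleman relation \eqref{eq: 2} still holds, so that $\alpha'[\alpha(t)]\,\alpha'(t)=1$ by the chain rule. With this in place, every construction used in Lemma~\ref{auxiliary lem1} transfers.

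Next I would set up the two correspondences that replace \eqref{ro2} and \eqref{-ro2}. If $\psi$ solves the homogeneous equation \eqref{eq: 7}, then $\omega=\{\omega_1,\omega_2\}$ with $\omega_1(t)=\psi(t)$, $\omega_2(t)=\gamma\,\alpha'(t)\,\psi[\alpha(t)]$ solves the homogeneous system \eqref{eq: 9} and, using $\alpha'[\alpha(t)]\,\alpha'(t)=1$, satisfies the symmetry relation
\begin{equation*}
\gamma\,\alpha'(t)\,\omega_2[\alpha(t)]=\omega_1(t);
\end{equation*}
if $\omega$ solves the homogeneous accompanying equation \eqref{eq: 8}, then $\omega_1(t)=\omega(t)$, $\omega_2(t)=-\gamma\,\alpha'(t)\,\omega[\alpha(t)]$ solves \eqref{eq: 9} and satisfies $\gamma\,\alpha'(t)\,\omega_2[\alpha(t)]=-\omega_1(t)$. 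For the converse I would attach to an arbitrary solution $\omega=\{\omega_1,\omega_2\}$ of the homogeneous system \eqref{eq: 9} the twisted vector $\widetilde\omega=\{\widetilde\omega_1,\widetilde\omega_2\}$ with $\widetilde\omega_1(t)=\gamma\,\alpha'(t)\,\omega_2[\alpha(t)]$ and $\widetilde\omega_2(t)=\gamma\,\alpha'(t)\,\omega_1[\alpha(t)]$, check that $\widetilde\omega$ is again a solution of \eqref{eq: 9} and that $\omega\mapsto\widetilde\omega$ is an involution (by \eqref{eq: 2} and the chain rule). Then $\omega_{\pm}=\tfrac{1}{2}\,(\omega\pm\widetilde\omega)$ splits $\omega$ into a summand in the $+1$-eigenspace of the involution, which by the first correspondence comes from a solution of \eqref{eq: 7}, and a summand in the $-1$-eigenspace, which by the second comes from a solution of \eqref{eq: 8}. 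Since both correspondences are linear and injective, passing to bases of the two eigenspaces yields $l^{*}=l_1^{*}+l_2^{*}$, exactly as in the proof of Lemma~6.1 in \cite[pp.~63--66]{L}.

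The one step requiring genuine work --- and the one I expect to be the main obstacle --- is the verification that the twist $\omega\mapsto\widetilde\omega$ carries solutions of \eqref{eq: 9} to solutions of \eqref{eq: 9}. Substituting $\widetilde\omega$ into the two rows of \eqref{eq: 9} and reducing back to the original system comes down, after careful sign bookkeeping, to the change-of-variable identity for the singular Cauchy operator $S$ under the shift,
\begin{equation*}
\frac{1}{\pi i}\int_\Gamma\frac{f[\alpha(\tau)]\,\alpha'(\tau)}{\alpha(\tau)-\alpha(t)}\,d\tau=\frac{\gamma}{\pi i}\int_\Gamma\frac{f(\sigma)}{\sigma-\alpha(t)}\,d\sigma\qquad (f\in B(\Gamma)),
\end{equation*}
combined with the built-in symmetry of the union system \eqref{eq: 9} under the simultaneous interchange $\omega_1\leftrightarrow\omega_2$ and $t\mapsto\alpha(t)$. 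Both ingredients are available in $B(\Gamma)$ because $\alpha'\in H_\mu$, $S$ is bounded on $B(\Gamma)$, and $B(\Gamma)$ is a commutative Banach algebra embedded in $C(\Gamma)$ (see \cite{B1,B3}); granting them, what remains is the linear-algebra bookkeeping already carried out in the proof of Lemma~\ref{auxiliary lem1}.
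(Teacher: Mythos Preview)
Your proposal is correct and follows essentially the same route as the paper. The paper's proof simply states that one repeats the argument of Lemma~\ref{auxiliary lem1}, selecting a complete system of linearly independent solutions of \eqref{eq: 9} so that $l_1^*$ of them satisfy $\omega_2(t)=\gamma\,\alpha'(t)\,\omega_1[\alpha(t)]$ and the remaining $l_2^*$ satisfy $\omega_2(t)=-\gamma\,\alpha'(t)\,\omega_1[\alpha(t)]$; your symmetry relations $\gamma\,\alpha'(t)\,\omega_2[\alpha(t)]=\pm\,\omega_1(t)$ are equivalent to these via the substitution $t\mapsto\alpha(t)$ and the identity $\alpha'[\alpha(t)]\,\alpha'(t)=1$, and your explicit involution and eigenspace decomposition spell out what the paper leaves to the reader.
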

\begin{proof} The proof of this lemma can be proved in a similar manner to Lemma \ref{auxiliary lem1}, if we take into account that it can be selected a complete system of linearly independent solutions of the union system of equations \eqref{eq: 9} so that $l_1^*$ solutions of this system satisfy
\begin{equation}\label{l1*}w_2(t)-\gamma\alpha'(t)w_1[\alpha(t)]=0,
\end{equation}
and other $l_2^*$ solutions satisfy
\begin{equation}\label{l2*}w_2(t)+\gamma\alpha'(t)w_1[\alpha(t)]=0.
\end{equation}
\end{proof}
\begin{lem}\label{auxiliary lem3}
Inhomogeneous SIE (\ref{eq: 1}) with a Carleman shift is solvable if and only if the corresponding inhomogeneous system of equations (\ref{eq: 5}) is solvable.
\end{lem}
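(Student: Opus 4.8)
The plan is to establish the equivalence in both directions by exploiting the symmetry of the corresponding system \eqref{eq: 5} under the shift $\alpha$. First I would prove the easy direction: if $\varphi$ solves the inhomogeneous equation \eqref{eq: 1}, then, exactly as in the construction preceding \eqref{eq: 5}, the vector $\rho(t)=\{\rho_1(t),\rho_2(t)\}$ with $\rho_1(t)=\varphi(t)$ and $\rho_2(t)=\varphi[\alpha(t)]$ satisfies the first equation of \eqref{eq: 5} (this is just \eqref{eq: 1} itself rewritten, using that the kernel of the $d(t)$-term transforms via the substitution $\tau\mapsto\alpha(\tau)$ into $\frac{\gamma\alpha'(\tau)}{\alpha(\tau)-\alpha(t)}$), and satisfies the second equation of \eqref{eq: 5} because that equation is precisely \eqref{eq: 1} evaluated at the point $\alpha(t)$ in place of $t$, again after the same substitution. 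Hence solvability of \eqref{eq: 1} implies solvability of \eqref{eq: 5}.

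For the converse, suppose the system \eqref{eq: 5} has a solution $\rho=\{\rho_1,\rho_2\}$. The key observation is that the system \eqref{eq: 5} is invariant under the involution $t\mapsto\alpha(t)$ combined with the swap $(\rho_1,\rho_2)\mapsto(\widetilde\rho_1,\widetilde\rho_2)$ where $\widetilde\rho_1(t)=\rho_2[\alpha(t)]$ and $\widetilde\rho_2(t)=\rho_1[\alpha(t)]$; this uses \eqref{eq: 2}, i.e. $\alpha[\alpha(t)]=t$, together with $\gamma^2=1$ and the chain rule $\alpha'(t)\,\alpha'[\alpha(t)]=1$ (differentiate \eqref{eq: 2}). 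One checks that applying this involution to equation one of \eqref{eq: 5} returns equation two, and vice versa; since the right-hand side $(g(t),g[\alpha(t)])$ is itself fixed by the involution, the transformed vector $\{\widetilde\rho_1,\widetilde\rho_2\}$ is again a solution of \eqref{eq: 5}. Therefore the symmetrized vector $\rho^{\mathrm{sym}}=\tfrac12(\rho+\widetilde\rho)$ is a solution of \eqref{eq: 5} satisfying the compatibility relation $\rho^{\mathrm{sym}}_2[\alpha(t)]=\rho^{\mathrm{sym}}_1(t)$, i.e. \eqref{ro2}. Setting $\varphi(t):=\rho^{\mathrm{sym}}_1(t)$, relation \eqref{ro2} gives $\rho^{\mathrm{sym}}_2(t)=\varphi[\alpha(t)]$, so the first equation of \eqref{eq: 5} is exactly \eqref{eq: 1} for this $\varphi$; hence \eqref{eq: 1} is solvable. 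Throughout one must use that the operations involved — multiplication by the $B(\Gamma)$-coefficients, composition with $\alpha$ (which preserves $B(\Gamma)$ since $\alpha'\in H_\mu\subset B$ by the embedding properties recalled in the introduction), and the singular operator $S$ — all map $B(\Gamma)$ into itself, which was established in \cite{B1}.

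I expect the main obstacle to be verifying cleanly that the involution genuinely maps the system \eqref{eq: 5} to itself, i.e. the careful bookkeeping of the shift in the singular kernels: one has to confirm that the integral $\int_\Gamma\frac{\alpha'(\tau)\rho_2(\tau)}{\alpha(\tau)-\alpha(t)}\,d\tau$, after the substitution $\tau\mapsto\alpha(\sigma)$, becomes $\int_\Gamma\frac{\rho_2[\alpha(\sigma)]}{\sigma-t}\,d\sigma$ up to the sign $\gamma$, and similarly track how the completely continuous perturbations $K(t,\tau)$ and $K[\alpha(t),\alpha(\tau)]\alpha'(\tau)$ interchange under the involution. The remaining subtlety is that the symmetrization step is legitimate only if $B(\Gamma)$ is closed under composition with $\alpha$ and under the averaging, both of which follow from $B(\Gamma)$ being a Banach algebra with the stated embedding properties. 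Once the invariance is in hand, the symmetrization argument and the identification with \eqref{eq: 1} are formal; this mirrors the corresponding reasoning in \cite[Ch.~2]{L} for the $H_\mu$ and $L_p$ settings, with the only change being that here all function-space statements are read off from the Besov-space results of \cite{B1,B2,B3}.
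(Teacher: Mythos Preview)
Your proposal is correct and follows essentially the same route as the paper: the forward direction is immediate from the construction of \eqref{eq: 5}, and for the converse the paper also observes that if $\{\rho_1,\rho_2\}$ solves \eqref{eq: 5} then so does the swapped-and-shifted vector $\{\rho_2[\alpha(t)],\rho_1[\alpha(t)]\}$, after which the symmetrized average satisfies \eqref{ro2} and its first component solves \eqref{eq: 1}. Your write-up is in fact more precise than the paper's on two points: the paper's text contains an apparent slip (it writes the second solution as $\{\rho_1[\alpha(t)],\rho_2[\alpha(t)]\}$ without the swap, though the subsequent averaging formula uses the swapped version you give), and you spell out the kernel bookkeeping and the $B(\Gamma)$-stability issues that the paper leaves implicit.
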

\begin{proof} Suppose that the equation \eqref{eq: 1} is solvable and $\varphi$ is its solution. Then a vector
$\rho$ with components $\rho_{1}(t)=\varphi(t)$ and $\rho_{2}(t)=\varphi[\alpha(t)]$ is a solution of the system \eqref{eq: 5}.
Conversely, if a vector $\rho(t)=\{\rho_{1}(t),\rho_{2}(t)\}$ is a solution of the system \eqref{eq: 5}, then by an easy calculation it can be seen that the vector
$\rho(t)=\{\rho_{1}[\alpha(t)],\rho_{2}[\alpha(t)]\}$ is also a solution of the system \eqref{eq: 5}.
Therefore, a vector
$$\widetilde{\rho}(t)=\{\widetilde{\rho_{1}}(t),\widetilde{\rho_{2}}(t)\},$$
where $\widetilde{\rho_{1}}(t)=\frac{1}{2}(\rho_{1}(t)+\rho_{2}[\alpha(t)])$ and $\widetilde{\rho_{2}}(t)=\frac{1}{2}(\rho_{2}(t)+\rho_{1}[\alpha(t)]),$
is also a solution of the system \eqref{eq: 5}. This solution satisfies condition $\widetilde{\rho_{2}}[\alpha(t)]=\widetilde{\rho_{1}}(t).$
Consequently, the function $\varphi(t)=\rho_{1}(t)$ is a solution of such equation \eqref{eq: 5}, thereby completing the proof.

\end{proof}

\begin{rmk}\label{indm=indk} It can be easily checked (see \cite[pp. 68-69]{L}) that between operators $M$ in \eqref{eq: 1} and $K$ in \eqref{eq: 6} have the following connection. If a Carleman shift $\alpha(\cdot)$ preserves orientation on $\Gamma,$ then
\begin{equation}\label{UM}
uM-Ku=D,
\end{equation}
where $u(t)=\alpha(t)-t\neq 0$ (see \cite[p. 25]{L}) and $D$ is a completely continuous operator. If
$\alpha(\cdot)$ changes orientation on $\Gamma,$ then
\begin{equation}\label{SM}
SM-KS=D,
\end{equation}
where $S$ is the operator defined in \eqref{S oper} and $D$ is a completely continuous operator.
Since $u(t)\neq 0,$ it follows that $S$ is a Noetherian operator. Hence, by \eqref{UM} and \eqref{SM} we obtain that operators $M$ and $K$ is simultaneously Noetherian or not. In the case both are Noetherian operators, we have
$$Ind M=Ind K.$$
\end{rmk}

Now we can formulate the conditions for the Noetherity of the operator $L$ (the Noetherian solvability of the system (\ref{eq: 5})) in $L(B_2)$,  which is written in the form (\ref{eq: 5a}). The following result follows from Theorems 2 and 3 in \cite{B3} when $n=2$.
\begin{thm}\label{thm1}
Operator $L=CP_1+DQ_1+T$ defined in \eqref{eq: 10} is to be $F_+$ (or $F_-$)- operator if and only if
\begin{equation}\label{eq: 12}
\det C\left( t \right) \ne 0,\,\det D\left( t \right) \ne 0,\, t \in\Gamma .
\end{equation}
Conversely, if these conditions hold, then $R=C^{-1} P_1+D^{-1} Q_1$ is a two-sided regularizer of the operator $L$.
\end{thm}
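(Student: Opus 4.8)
The plan is to treat $L=CP_1+DQ_1+T$ from \eqref{eq: 10} as the case $n=2$ of the matrix singular operators over the commutative Banach algebra $B(\Gamma)$ studied in \cite{B3}, and to carry out the classical regularisation argument for paired operators, all of whose analytic ingredients are already available for the Besov algebra. Three facts will be used throughout. First, since $B(\Gamma)$ is a commutative Banach algebra which is inverse-closed (a disintegrating $R$-algebra, \cite{B3}), the conditions $\det C(t)\neq 0$ and $\det D(t)\neq 0$ on $\Gamma$ give $1/\det C,\ 1/\det D\in B(\Gamma)$, hence by Cramer's rule $C^{-1}(t),D^{-1}(t)\in B_{2\times2}(\Gamma)$ and $R=C^{-1}P_1+D^{-1}Q_1$ is bounded on $B_2(\Gamma)$. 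Second, from $S_1^2=I$ one has $P_1+Q_1=I$, $P_1^2=P_1$, $Q_1^2=Q_1$ and $P_1Q_1=Q_1P_1=0$. Third, and this is the only place where the fine structure of $B_{p,1}^{1/p}(\Gamma)$ really enters, for every $A\in B_{2\times2}(\Gamma)$ the commutator $S_1A-AS_1$ is completely continuous on $B_2(\Gamma)$; entrywise this is the commutator (gain-of-smoothness) property of the Cauchy operator $S$ on $B(\Gamma)$, which I would borrow from \cite{B1,B3}.

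Granting this, the sufficiency together with the regulariser formula is obtained by multiplying out $RL$ and $LR$: moving a multiplication operator across $P_1$ or $Q_1$ costs only a completely continuous error, so
\[
RL\equiv C^{-1}CP_1+C^{-1}D\,P_1Q_1+D^{-1}C\,Q_1P_1+D^{-1}DQ_1=P_1+Q_1=I
\]
modulo completely continuous operators, and symmetrically $LR\equiv I$. Hence $R$ is a two-sided regulariser, so $\alpha(L)$ and $\beta(L)$ are both finite, and in particular $L$ is simultaneously an $F_+$- and an $F_-$-operator.

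For the necessity I would argue by contradiction. Suppose $L$ is an $F_+$- (or $F_-$-) operator but $\det C(t_0)=0$ for some $t_0\in\Gamma$; the case $\det D(t_0)=0$ is identical with $P_1$ and $Q_1$ interchanged. The cleanest route is the local principle for the Banach algebra generated by the multiplication operators and $S_1$ (developed, for $B(\Gamma)$, in \cite{B3}): the local symbol of $L$ at $t_0$ is equivalent to the pair $(C(t_0),D(t_0))$, and one-sided invertibility of $L$ modulo completely continuous operators forces every local symbol to be one-sidedly invertible, whence $\det C(t_0)\neq 0$ and $\det D(t_0)\neq 0$. Concretely, one exhibits a singular sequence $\rho_n=u_n\xi_0$ with $C(t_0)\xi_0=0$, $\xi_0\neq 0$, where the scalar functions $u_n$ lie in the range of $P_1$, concentrate at $t_0$, have $\|u_n\|_{B(\Gamma)}=1$ and $u_n\rightharpoonup 0$; then $\|L\rho_n\|\to 0$ while $\{\rho_n\}$ has no norm-convergent subsequence, contradicting the $F_+$-property.

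The step I expect to be the main obstacle, in a fully self-contained treatment, is the commutator property in the third item above: it is not a formal consequence of the boundedness of $S$ on $B(\Gamma)$, but rests on the sharper smoothing estimate for the commutator of $S$ with multiplication, peculiar to $B_{p,1}^{1/p}(\Gamma)$ at the limiting smoothness $r=1/p$; the accompanying local principle used in the necessity direction is of the same nature. Since both are supplied by \cite{B1,B2,B3}, the proof ultimately reduces to verifying that $L$ with $n=2$ meets the hypotheses of \cite[Theorems 2 and 3]{B3} and reading off both the criterion \eqref{eq: 12} and the regulariser $R=C^{-1}P_1+D^{-1}Q_1$.
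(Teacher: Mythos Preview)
Your proposal is correct and aligns with the paper's treatment: the paper does not give a self-contained proof of this theorem at all, but simply states that it follows from Theorems~2 and~3 of \cite{B3} in the case $n=2$, which is precisely where your argument lands. The regularisation computation, the commutator compactness, and the local-principle argument for necessity that you sketch are exactly the ingredients underlying those results in \cite{B3}, so you have in effect unpacked what the paper takes as a black box.
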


\begin{rmk}
 If condition (\ref{eq: 12}) holds, then the matrix $D^{-1} (t)C(t)$ have a right factorization
$${D^{ - 1}}\left( t \right)C\left( t \right) = C_{-}\left( t \right)U\left( t \right)C_{+}\left( t \right),U\left( t \right) = \left\{ {{t^{{K_j}}}{\delta _{kj}}} \right\}.$$
\end{rmk}

\begin{thm}\label{thm2}
Let condition (\ref{eq: 12}) holds. If the characteristic equation
$$\left( {{P_1} + D{Q_1}} \right)\rho  = f, \quad f \in {B_{2}} (\Gamma),$$
is solvable, then its solution is given by the following formula
$$\rho  = {\left( {C{P_1} + D{Q_1}} \right)^{ - 1}}\left( f \right),$$
for which
$${\left( {{P_1} + D{Q_1}} \right)^{ - 1}} = \left( {C_ + ^{ - 1}{P_1} + {C_ - }{Q_1}} \right)\left( {{U^{ - 1}}{P_1} + {Q_1}} \right)C_ - ^{ - 1}{D^{ - 1}}$$
have left and right inverse operator to $CP_1+DQ_1$ depending on the $k = Ind\left( {D_1^{ - 1}{C_1}} \right) \ge 0$ or $k \le 0.$
The index of the Noetherian operator corresponding to the system \eqref{eq: 5} is calculated by the following formula
\begin{equation}
Ind L= \frac{1}{{2\pi }}{\left\{ {arg\frac{{\det D\left( t \right)}}{{\det C\left( t \right)}}} \right\}_\Gamma }.
\label{eq: 13}
\end{equation}
\end{thm}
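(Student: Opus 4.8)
The plan is to reduce the theorem to the factorization theory for scalar and matrix singular integral operators on $B(\Gamma)$, exactly as in Theorem~\ref{thm1}, and then assemble the index formula by combining Lemmas~\ref{auxiliary lem1}--\ref{auxiliary lem3} with Remark~\ref{indm=indk}. First, assuming \eqref{eq: 12}, I would write $L=CP_1+DQ_1+T$ and factor out $D$ on the left: $L=D\left(D^{-1}C\,P_1+Q_1\right)+T$, so that up to the completely continuous $T$ and the invertible multiplication operator $D$ (invertible in $L(B_2)$ precisely because $\det D(t)\neq 0$ and $B(\Gamma)$ is a Banach algebra), the Noetherity of $L$ is equivalent to that of the characteristic operator $P_1+(D^{-1}C)Q_1$. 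Now invoke that $B(\Gamma)$ is a disintegrating $R$-algebra, so the non-singular matrix $D^{-1}(t)C(t)$ admits a right factorization $D^{-1}C=C_-\,U\,C_+$ with $U(t)=\{t^{k_j}\delta_{kj}\}$, $k_1\geq k_2$, and the partial indices $k_1,k_2$ are uniquely determined.

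Second, I would verify the explicit inversion formula stated in the theorem. Since $C_\pm$ are boundary values of matrix functions analytic and invertible in $G_\pm$ (with $C_+$ also invertible in $\overline{G_+}$), the operators $C_+^{-1}P_1+C_-Q_1$ and $C_-^{-1}D^{-1}$ are invertible in $L(B_2)$ — this uses that $P_1,Q_1$ are complementary projections ($P_1+Q_1=I$, $P_1Q_1=Q_1P_1=0$, which follows from $S_1^2=I$ modulo compact operators on $B(\Gamma)$, itself proved in \cite{B1}), together with the standard identities $C_+^{-1}P_1=P_1C_+^{-1}P_1$, $C_-Q_1=Q_1C_-Q_1$ coming from analyticity. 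The remaining factor $U^{-1}P_1+Q_1$ is the diagonal singular operator with symbol $t^{-k_j}$ on the first block and $1$ on the second, whose left/right invertibility depends on the sign of $k=k_1$ (or more precisely on $\mathrm{Ind}(D_1^{-1}C_1)$, in the notation of the theorem): it is one-sided invertible with the one-sided inverse being $U\,P_1+Q_1$ up to a projection of rank $|k|$. Multiplying these three factors in the order written reproduces $(CP_1+DQ_1)^{-1}$ (two-sided when all partial indices vanish, one-sided otherwise), and the defect numbers $\alpha(L),\beta(L)$ are then the sums of the negative, resp.\ positive, partial indices — in particular both are finite, so $L$ is Noetherian, and
\begin{equation*}
\mathrm{Ind}\,L=k_1+k_2=\frac{1}{2\pi i}\left\{\log\det\bigl(D^{-1}(t)C(t)\bigr)\right\}_\Gamma=\frac{1}{2\pi}\left\{\arg\frac{\det D(t)}{\det C(t)}\right\}_\Gamma,
\end{equation*}
which is \eqref{eq: 13}. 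Here $\{\,\cdot\,\}_\Gamma$ denotes the increment around $\Gamma$, and the middle equality is the classical fact that the sum of partial indices of a non-singular matrix equals the winding number of its determinant.

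Third, I would connect this back to the original equation. By Lemma~\ref{auxiliary lem3} the inhomogeneous equation \eqref{eq: 1} is solvable iff the corresponding system \eqref{eq: 5} is, and by Lemma~\ref{auxiliary lem1} the kernel dimension splits as $l=l_1+l_2$ where $l_1=\dim\ker M$, $l_2=\dim\ker K$; similarly Lemma~\ref{auxiliary lem2} gives $l^*=l_1^*+l_2^*$ for the allied system. Since $L$ is Noetherian, $\mathrm{Ind}\,L=l-l^*=(l_1-l_1^*)+(l_2-l_2^*)=\mathrm{Ind}\,M+\mathrm{Ind}\,K$, and Remark~\ref{indm=indk} gives $\mathrm{Ind}\,M=\mathrm{Ind}\,K$, so $\mathrm{Ind}\,M=\tfrac12\mathrm{Ind}\,L$; this yields the index of the original SIE once \eqref{eq: 13} is established, and in particular shows $M$ itself is Noetherian.

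The main obstacle I anticipate is the verification that the partial factorization $D^{-1}C=C_-UC_+$ actually lives in $B_{2\times2}(\Gamma)$ with the analyticity/invertibility of $C_\pm$ holding in the Besov-algebra setting rather than the classical Hölder or $L_p$ setting — i.e.\ making precise the assertion ``$B(\Gamma)$ is a disintegrating $R$-algebra, hence every non-singular matrix admits a right (and left) factorization,'' and checking that the boundary operators $C_+^{-1}P_1+C_-Q_1$ built from such factors are genuinely invertible in $L(B_2)$. This is where the boundedness of $S$ on $B(\Gamma)$ (from \cite{B1}), the algebra property of $B(\Gamma)$ (from \cite{B1,B2}), and the abstract factorization result \cite[Corollary VII.2.1]{PS} for disintegrating $R$-algebras must be combined carefully; the rest is bookkeeping of partial indices and projections that is formally identical to the $H_\mu$ and $L_p$ treatments in \cite{L,KL}.
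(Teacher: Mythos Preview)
The paper gives no proof of Theorem~\ref{thm2}: it is stated immediately after the remark on the right factorization of $D^{-1}C$, with no argument, evidently as a consequence of the matrix singular-integral theory in the Besov setting developed in \cite{B3} (to which Theorem~\ref{thm1} is explicitly attributed) together with the abstract factorization result \cite[Corollary~VII.2.1]{PS}. Your first two paragraphs are therefore a reasonable and essentially correct fleshing-out of what the paper leaves to citation.

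Two points need correction, however. Your entire third paragraph---invoking Lemmas~\ref{auxiliary lem1}--\ref{auxiliary lem3} and Remark~\ref{indm=indk} to conclude $\mathrm{Ind}\,M=\tfrac12\,\mathrm{Ind}\,L$---is not part of Theorem~\ref{thm2} at all: that is verbatim the statement and proof of Theorem~\ref{thm4} in the paper. Theorem~\ref{thm2} concerns only the system operator $L$ and formula~\eqref{eq: 13}; nothing about the shift operator $M$ belongs here, so that paragraph should be removed.

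Second, your displayed index computation
\[
\mathrm{Ind}\,L=k_1+k_2=\frac{1}{2\pi i}\bigl\{\log\det(D^{-1}C)\bigr\}_\Gamma=\frac{1}{2\pi}\Bigl\{\arg\frac{\det D}{\det C}\Bigr\}_\Gamma
\]
is internally inconsistent: the middle term is the winding number of $\det C/\det D$, which equals $+(k_1+k_2)$, whereas the right-hand term is the winding number of $\det D/\det C$, which equals $-(k_1+k_2)$. With $P_1=\tfrac12(I+S_1)$ the standard computation gives $\mathrm{Ind}(CP_1+DQ_1)=-(k_1+k_2)$, so it is the rightmost expression that equals $\mathrm{Ind}\,L$ (agreeing with~\eqref{eq: 13}), and your own description of $\alpha(L),\beta(L)$ in terms of negative and positive partial indices is in fact consistent with $-(k_1+k_2)$, not $+(k_1+k_2)$. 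Two sign slips cancel to yield the correct final formula, but the intermediate identification $\mathrm{Ind}\,L=k_1+k_2$ should be corrected.
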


Following \cite{L}, condition (\ref{eq: 12}) and formula (\ref{eq: 13}) can be rewritten according to the following cases: $\alpha(t)$ is the direct shift $(\gamma = 1)$ or $\alpha(t)$ is the inverse shift $(\gamma = -1)$.
If $\alpha(t)$ preserves orientation on $\Gamma (\gamma = 1)$, then
\begin{equation}\label{14}\begin{split}\det D\left( t \right) = \det \left( {P\left( t \right) - Q\left( t \right)} \right) = {c_1}\left( t \right){c_1}\left[ {\alpha \left( t \right)} \right] - {d_1}\left( t \right){d_1}\left[ {\alpha \left( t \right)} \right] = {\Delta _1}\left( t \right),\\
\det C\left( t \right) = \det \left( {P\left( t \right) + Q\left( t \right)} \right) = {a_1}\left( t \right){a_1}\left[ {\alpha \left( t \right)} \right] - {b_1}\left( t \right){b_1}\left[ {\alpha \left( t \right)} \right] = {\Delta _2}\left( t \right).
\end{split}\end{equation}
Similarly, if $\alpha(t)$ changes orientation on $\Gamma (\gamma=-1),$ then
\begin{equation}\label{15}\begin{split}\det D\left( t \right) =  - {b_1}\left( t \right){d_1}\left[ {\alpha \left( t \right)} \right] + {c_1}\left( t \right){a_1}\left[ {\alpha \left( t \right)} \right] = \Delta \left( t \right),\\
\det C\left( t \right) =  - {b_1}\left[ {\alpha \left( t \right)} \right]{d_1}\left( t \right) + {c_1}\left[ {\alpha \left( t \right)} \right]{a_1}\left( t \right) = \Delta \left[ {\alpha \left( t \right)} \right].
\end{split}\end{equation}

The principal result of this paper is the following which describes Noetherity of the operator $L.$
\begin{thm}\label{thm3}
In order for SIE (\ref{eq: 1}) with a Carleman shift to be Noetherian, it suffices to satisfy the following conditions
\begin{enumerate}[{\rm (i)}]
\item $\Delta_1 (t)\ne 0$, $\Delta_2 (t) \ne 0,$ if  $\alpha(t)$ is an orientation-preserving shift;
\item $\Delta(t) \ne 0,$  if $\alpha(t)$ is an orientation-changing shift.
\end{enumerate}
\end{thm}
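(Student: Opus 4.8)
The plan is to transfer the question to the shift-free corresponding system \eqref{eq: 5}, whose operator $L=CP_1+DQ_1+T$ from \eqref{eq: 10} is completely controlled by Theorem~\ref{thm1}, and then to push the Noetherian property back to $M$ by means of Lemmas~\ref{auxiliary lem1}--\ref{auxiliary lem3} and Remark~\ref{indm=indk}. Conceptually, $L$ is (up to a bounded isomorphism) the direct sum of $M$ and of the operator $K$ of the accompanying equation \eqref{eq: 6}, and those three lemmas are the concrete manifestations of this splitting.

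First I would verify that hypotheses (i), (ii) are exactly conditions \eqref{eq: 12} for $L$. If $\alpha$ preserves orientation ($\gamma=1$), then by \eqref{14} one has $\det D(t)=\Delta_1(t)$, $\det C(t)=\Delta_2(t)$, so (i) coincides with \eqref{eq: 12}. If $\alpha$ reverses orientation ($\gamma=-1$), then by \eqref{15} one has $\det D(t)=\Delta(t)$ and $\det C(t)=\Delta[\alpha(t)]$; since $\alpha$ maps $\Gamma$ homeomorphically onto itself, the hypothesis $\Delta\neq0$ on $\Gamma$ is equivalent to $\Delta\circ\alpha\neq0$ on $\Gamma$, so again \eqref{eq: 12} holds. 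Then Theorem~\ref{thm1} applies: $L$ is both an $F_+$- and an $F_-$-operator and possesses the two-sided regularizer $R=C^{-1}P_1+D^{-1}Q_1$; since a bounded operator possessing a two-sided regularizer has closed range, $L$ is normally solvable, hence $L$ is Noetherian with $\alpha(L)<\infty$ and $\beta(L)<\infty$.

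Next I would descend to $M$. By Lemma~\ref{auxiliary lem1}, $\dim\ker M=l_1\le l_1+l_2=l=\alpha(L)<\infty$. By Lemma~\ref{auxiliary lem3}, for $g\in B(\Gamma)$ the equation $M\varphi=g$ is solvable if and only if $L\rho=(g,g\circ\alpha)$ is; since the map $\iota\colon g\mapsto(g,g\circ\alpha)$ is bounded and linear on $B(\Gamma)$ (the shift is bounded on $B(\Gamma)$, being a summand of $M$, see \cite{B1}) and $\operatorname{Im}L$ is closed, $\operatorname{Im}M=\iota^{-1}(\operatorname{Im}L)$ is closed, so $M$ is normally solvable. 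For the defect number: since \eqref{eq: 9} is the system allied with \eqref{eq: 5}, the duality of a normally solvable operator with its transpose gives that $\beta(L)$ equals the number $l^*$ of linearly independent solutions of \eqref{eq: 9}, whence $l^*<\infty$; then Lemma~\ref{auxiliary lem2} gives $l_1^*\le l^*<\infty$, and the same duality applied to $M$ (solvability of $M\varphi=g$ being orthogonality to the solutions of the union equation \eqref{eq: 7}) yields $\beta(M)=l_1^*<\infty$. Therefore $M$ has finite $d$-characteristic, i.e.\ $M$ is Noetherian. Finally, by Remark~\ref{indm=indk}, $K$ is simultaneously Noetherian with $Ind\,M=Ind\,K$, and since $Ind\,L=Ind\,M+Ind\,K=2\,Ind\,M$, formula \eqref{eq: 13} of Theorem~\ref{thm2} also delivers $Ind\,M=\tfrac1{4\pi}\bigl\{\arg(\det D(t)/\det C(t))\bigr\}_\Gamma$.

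The hard part will be the two-sided identification of \eqref{eq: 5} with \eqref{eq: 1}--\eqref{eq: 6} underlying Lemmas~\ref{auxiliary lem1}--\ref{auxiliary lem3} — equivalently, the relations $LV_1=V_1M$, $LV_1'=V_1'K$ for $V_1\varphi=(\varphi,\varphi\circ\alpha)$, $V_1'\varphi=(\varphi,-\varphi\circ\alpha)$: since the Cauchy integrals in \eqref{eq: 5} exist only in the principal-value sense, the change of variables $\sigma=\alpha(\tau)$ in the integrals with denominator $\alpha(\tau)-\alpha(t)$ must be justified with care, using the fractional-smoothness estimates of \cite{B1}, the $H_\mu$-regularity of $\alpha'$, and the Carleman condition \eqref{eq: 2}, and one must track the cancellation of the orientation factor $\gamma$. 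The other delicate point is the duality $\beta(M)=l_1^*$ (and $\beta(L)=l^*$), which requires the formal transposes to act within $B(\Gamma)$ and to represent the cokernels; this rests on $B(\Gamma)$ being a Banach algebra stable under $S$ and under composition with $\alpha$. Granting these, the theorem reduces to the determinant bookkeeping of the first step.
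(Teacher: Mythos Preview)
Your proposal is correct and follows the same overall strategy as the paper: identify hypotheses (i)/(ii) with \eqref{eq: 12} via \eqref{14}--\eqref{15}, apply Theorem~\ref{thm1} to get $L$ Noetherian, and descend to $M$ through Lemmas~\ref{auxiliary lem1}--\ref{auxiliary lem3}. The one genuine difference is in how normal solvability of $M$ and the identification $\beta(M)=l_1^*$ are obtained. You argue abstractly: $\operatorname{Im}M=\iota^{-1}(\operatorname{Im}L)$ is closed as the preimage of a closed set under a bounded map, and then invoke a duality $\beta(M)=l_1^*$ that you flag as delicate. The paper instead does the explicit bookkeeping: starting from the solvability condition $\int_\Gamma(g\,w_1+g\!\circ\!\alpha\,w_2)\,dt=0$ for \eqref{eq: 5}, it splits the solutions of the allied system \eqref{eq: 9} according to \eqref{l1*} and \eqref{l2*}, checks that the \eqref{l2*}-conditions are automatically satisfied after the substitution $t\mapsto\alpha(t)$, and that the \eqref{l1*}-conditions collapse to $\int_\Gamma g\,\psi_k\,dt=0$ for a basis $\{\psi_k\}_{k=1}^{l_1^*}$ of solutions of \eqref{eq: 7}. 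This simultaneously proves normal solvability and $\beta(M)=l_1^*$ without appealing to an abstract transpose pairing on $B(\Gamma)$, which is what your closing paragraph correctly identifies as the fragile step in your version. Your index computation at the end belongs to Theorem~\ref{thm4} and is not needed here.
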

\begin{proof} If condition (i) or (ii) holds, then $L$ is a Noetherian operator. Consequently, the numbers of linearly independent solutions of the homogeneous system of equations \eqref{eq: 5} and its union homogeneous system are finite, i.e. $l<\infty,$ $l^{*}<\infty.$ Therefore, it follows from Lemmas \ref{auxiliary lem1} and \ref{auxiliary lem2} that $l_1<\infty,$ $l_1^*<\infty.$ Now, we will see that
from the normally solvability of the system \eqref{eq: 5} follows normally solvability of the equation \eqref{eq: 1}. Indeed, for the solvability of the system of equations \eqref{eq: 1} it is necessary and sufficient to satisfy
\begin{equation}\label{a}
\int_{\Gamma}G(t)W(t)dt=0,
\end{equation}
where $G(t)=\{g(t),g[\alpha(t)]\}$ and $W(t)=\{w_1(t),w_2(t)\}$ is a solution of the system of equations \eqref{eq: 9} which is union with \eqref{eq: 5}.
Rewriting \eqref{a}, we obtain
\begin{equation}\label{b}
\int_{\Gamma}(g(t)w_1(t)+g[\alpha(t)]w_2(t))dt=0.
\end{equation}
 By Lemma \ref{auxiliary lem3}, \eqref{b} is a necessary and sufficient condition for solvability of the equation \eqref{eq: 1}.
 Let $W(t)$ be a solution of the union system \eqref{eq: 9} satisfying condition \eqref{l1*}
 $$w_2(t)-\gamma \alpha'(t)w_1(t)=0.$$
 Then, condition \eqref{b} is represented as
 $$\int_{\Gamma}g(t)w_1(t)dt+\gamma\int_{\Gamma}g[\alpha(t)]\alpha'(t)w_1[\alpha(t)]dt=0.$$
 By a substitution $t$ into $\alpha(t)$ in the previous equation, we obtain
  $$\int_{\Gamma}g(t)w_1(t)dt=0,$$
  where $w_1(t)=\psi(t)$ is any solution of the equation \eqref{eq: 7} which is union with \eqref{eq: 1}.
  Next, let $\psi(t)$ be a solution of the union system of equations \eqref{eq: 9} satisfying condition \eqref{l2*}
  $$w_2(t)+\gamma \alpha'(t) w_1[\alpha(t)]=0.$$
  In this case condition \eqref{b} is represented as
   $$\int_{\Gamma}g(t)w_1(t)dt-\gamma\int_{\Gamma}g[\alpha(t)]\alpha'(t)w_1[\alpha(t)]dt=0.$$
   Again, substituting $t$ into $\alpha(t)$ in the previous equation, we obtain that condition \eqref{b} satisfying \eqref{l2*} holds automatically for the solution of the union system \eqref{eq: 9}.
   Therefore, for the solvability of the equation \eqref{eq: 1} it is necessary and sufficient to satisfy the following condition
   $$\int_{\Gamma}g(t)\psi_{k}(t)dt=0, \quad k=1,2,...,l_1^*,$$
   where $\{\psi_k(t)\}$ ($k=1,2,...,l_1^*$) is a complete system of linearly independent solutions of the union equation \eqref{eq: 7}. This completes the proof.
   \end{proof}

Next result shows the Index formula for SIU (1) with a Carleman shift.
\begin{thm}\label{thm4}
If the Noetherian conditions are satisfied, then the index for SIU (1) with a Carleman shift is calculated by the following formulas:
\begin{enumerate}[{\rm (i)}]
\item If $\alpha(t)$ is an orientation-preserving shift, then
$$IndM = \frac{1}{{4\pi }}{\left\{ {arg\frac{{{\Delta _1}\left( t \right)}}{{{\Delta _2}\left( t \right)}}} \right\}_\Gamma};$$
\item  If $\alpha(t)$ is an orientation-changing shift, then
$$IndM = \frac{1}{{2\pi }}{\left\{ {arg\Delta \left( t \right)} \right\}_\Gamma }.$$
\end{enumerate}
\end{thm}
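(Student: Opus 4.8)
The plan is to derive $Ind\,M$ from the index of the shift-free system operator $L$ of \eqref{eq: 5a}, combining Lemmas~\ref{auxiliary lem1}--\ref{auxiliary lem2}, Remark~\ref{indm=indk}, Theorem~\ref{thm2}, and the determinant identities \eqref{14}--\eqref{15}.

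First I would record the $d$-characteristics of the operators in play. Under hypothesis (i) or (ii), Theorem~\ref{thm3} together with its proof shows that $M$, the accompanying operator $K$ of \eqref{eq: 6}, and $L$ are all Noetherian. By definition $\alpha(M)=l_1$, $\alpha(K)=l_2$, $\alpha(L)=l$. For the cokernels, the proof of Theorem~\ref{thm3} establishes that $M\varphi=g$ is solvable exactly when $g$ is orthogonal to the $l_1^{*}$ linearly independent solutions of the union equation \eqref{eq: 7}, so $\beta(M)=l_1^{*}$; the identical argument applied to $K$ with its union equation \eqref{eq: 8} gives $\beta(K)=l_2^{*}$, and for the shift-free system $L$ this is the classical fact that $\mathrm{coker}\,L$ is described by the solutions of the transposed (union) system \eqref{eq: 9}, so $\beta(L)=l^{*}$. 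Now Lemmas~\ref{auxiliary lem1} and~\ref{auxiliary lem2} give $l=l_1+l_2$ and $l^{*}=l_1^{*}+l_2^{*}$, hence
$$Ind\,L=\alpha(L)-\beta(L)=(l_1-l_1^{*})+(l_2-l_2^{*})=Ind\,M+Ind\,K.$$
By Remark~\ref{indm=indk} we have $Ind\,M=Ind\,K$, so $Ind\,L=2\,Ind\,M$, that is, $Ind\,M=\frac{1}{2}\,Ind\,L$.

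Next I would insert the value of $Ind\,L$ supplied by Theorem~\ref{thm2}, namely $Ind\,L=\frac{1}{2\pi}\left\{arg\frac{\det D(t)}{\det C(t)}\right\}_{\Gamma}$ (formula \eqref{eq: 13}). In case (i), where $\alpha$ preserves orientation ($\gamma=1$), the identities \eqref{14} read $\det D(t)=\Delta_1(t)$ and $\det C(t)=\Delta_2(t)$, so
$$Ind\,M=\frac{1}{2}\cdot\frac{1}{2\pi}\left\{arg\frac{\Delta_1(t)}{\Delta_2(t)}\right\}_{\Gamma}=\frac{1}{4\pi}\left\{arg\frac{\Delta_1(t)}{\Delta_2(t)}\right\}_{\Gamma},$$
which is assertion (i). In case (ii), where $\alpha$ reverses orientation ($\gamma=-1$), the identities \eqref{15} read $\det D(t)=\Delta(t)$ and $\det C(t)=\Delta[\alpha(t)]$, whence
$$Ind\,L=\frac{1}{2\pi}\Bigl(\{arg\,\Delta(t)\}_{\Gamma}-\{arg\,\Delta[\alpha(t)]\}_{\Gamma}\Bigr).$$

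The one step that needs care is the orientation bookkeeping in (ii): since $\alpha$ is an orientation-reversing homeomorphism of $\Gamma$, as $t$ runs once around $\Gamma$ in the positive sense the point $s=\alpha(t)$ runs once around $\Gamma$ in the negative sense; because $\Delta(t)\ne 0$ on $\Gamma$ the increment of the argument is unambiguous, so $\{arg\,\Delta[\alpha(t)]\}_{\Gamma}=-\{arg\,\Delta(t)\}_{\Gamma}$. Substituting this gives $Ind\,L=\frac{1}{\pi}\{arg\,\Delta(t)\}_{\Gamma}$, hence $Ind\,M=\frac{1}{2}\,Ind\,L=\frac{1}{2\pi}\{arg\,\Delta(t)\}_{\Gamma}$, which is assertion (ii). I expect the genuinely delicate point of the whole argument to be the identifications $\beta(M)=l_1^{*}$ and $\beta(K)=l_2^{*}$ — the statement that the cokernel dimension of a singular integral operator with Carleman shift equals the null-space dimension of its union operator — but for $M$ this has already been carried out in the proof of Theorem~\ref{thm3}, and the same reasoning transfers to $K$ and, classically, to the shift-free system $L$.
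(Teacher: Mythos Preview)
Your argument is correct and follows exactly the paper's route: Lemmas~\ref{auxiliary lem1}--\ref{auxiliary lem2} plus Remark~\ref{indm=indk} give $Ind\,L=2\,Ind\,M$, and then formula~\eqref{eq: 13} together with \eqref{14}--\eqref{15} yields both cases. You are simply more explicit than the paper in case~(ii), where you spell out the orientation-reversal identity $\{arg\,\Delta[\alpha(t)]\}_{\Gamma}=-\{arg\,\Delta(t)\}_{\Gamma}$ that the paper leaves to the reader.
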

\begin{proof}By Lemmas \ref{auxiliary lem1} and \ref{auxiliary lem2}, we have
$$Ind L=Ind M+Ind K.$$
By definition, $Ind L=l-l^*,$ $Ind M= l_1-l_1^*,$ and $Ind K=l_2-l_2^*.$
By Remark \ref{indm=indk}, we have
$$Ind M=Ind K.$$
Consequently,
$$Ind L=2 Ind M.$$
Hence, the assertion follows from the formulas \eqref{eq: 13}-\eqref{15}.
\end{proof}

\section{Acknowledgment}
This work was partially supported by the grant No. AP05133283
of the Science Committee of the Ministry of Education and Science of the Republic of Kazakhstan.

\end{document}